\titleformat*{\section}{\Large\bfseries}
\newtheorem{theorem}{Theorem}
\newtheorem{lemma}{Lemma}
\newcommand{\Forbidden}{L}
\newcommand{\Triangle}{C_3}
\newcommand{\Square}{C_4}
\newcommand{\KTwoThree}{K_{2,3}}
\newcommand{\House}{X}
\newcommand{\Domino}{Y}
\newcommand{\Mongolian}{Z}
\newcommand{\etal}{\mbox{\emph{et al.\ }}}
\journal{arXiv}
\begin{document}

\begin{frontmatter}



\title{Cubic Graphs with Total Domatic Number at Least Two}


\author[math]{Saieed Akbari}
\ead{s\_akbari@sharif.edu}
\author[computer]{Mohammad Motiei}
\ead{motiei@ce.sharif.edu}
\author[computer]{Sahand Mozaffari}
\ead{smozaffari@ce.sharif.edu}
\author[computer]{Sina Yazdanbod}
\ead{syazdanbod@ce.sharif.edu}
\address[math]{Department of Mathematical Sciences,}
\address[computer]{Department of Computer Engineering,\\Sharif University of Technology}

\begin{abstract}
Let $G$ be a graph.
A total dominating set of $G$ is a set $S$ of vertices of $G$ such that every vertex is adjacent to at least one vertex in $S$.
The total domatic number of a graph is the maximum number of total dominating sets which partition the vertex set of $G$. In this paper we would like to characterize the cubic graphs with total domatic number at least two.
\end{abstract}

\begin{keyword}
Total Domination \sep Total Domatic Number \sep Coupon Coloring

\MSC[2010] 05C15 \sep 05C69
\end{keyword}

\end{frontmatter}

\section{Introduction}
\label{sec:introduction}

The total domination of graphs was introduced by Cockayne \etal \cite{cockayne1980total}. The literature on this subject has been surveyed and detailed in the two excellent domination books by Haynes \etal \cite{haynes1998fundamentals} \cite{haynes1998fundamentals,haynes1998domination} who did an outstanding job of unifying results scattered through some 1200 domination papers at that time. More recent results on this topic can be found in \cite{henning2013total}.

Throughout this paper, all graphs are simple, that is they have no loops or multiple edges.
We denote by $V(G)$ and $E(G)$ the vertex set and the edge set of $G$, respectively.
For two positive integers $m$ and $n$, $C_n$ and $K_{m,n}$ denote the cycle of order $n$ and the complete bipartite graph of sizes $m$ and $n$, respectively.

Let $G$ be a simple graph. The \textit{open neighborhood} of a vertex $v$, denoted by $N_G(v)$, is the set of vertices adjacent to $v$ in $G$.
For a subgraph $H$ of $G$, the \textit{open neighborhood} of $H$ is defined as
$\bigcup_{v \in V(H)} \left( N_G(v) \setminus V(H) \right)$
and is denoted by $N_G(H)$.
The \textit{open neighborhood hypergraph} of $G$ is a hypergraph on the same vertex set, where the hyperedges are defined as
$\left\{ N_G(v) \ \middle|\  v \in V(G) \right\}$.

A set $T \subseteq V(G)$ of vertices is said to be a \textit{total dominating set} of $G$, if every vertex $v \in V(G)$ is adjacent to at least one vertex in $T$.
The \textit{total domatic number} of a graph $G$, denoted by $d_t(G)$, is the maximum number of total dominating sets which partition the vertex set of $G$.
One can easily see that for a graph $G$ with no isolated vertex, $d_t(G) \geq 1$. The main goal of this paper is classifying of the family of cubic graphs for which $d_t(G) \geq 2$.


It is fairly easy to see that a graph can be partitioned into two total dominating sets, if and only if its open neighborhood hypergraph is bipartite. It is an easy consequence of the renowned Lov{\'a}sz local lemma \cite{spencer1977asymptotic} that every $r$-uniform $r$-regular hypergraph is bipartite for all $r \geq 9$ \cite{spencer1987ten}. Alon and Bregman \cite{alon1988every} improved that result by employing algebraic methods to assert that the same holds for $r \geq 8$. Henning and Yeo \cite{henning20132} later adapted their technique to prove:
\begin{theorem}
\label{thm:hypergraph-bipartiteness}
\textnormal{(\cite{henning20132})} Every $r$-uniform $r$-regular hypergraph is bipartite for all $r \geq 4$.
\end{theorem}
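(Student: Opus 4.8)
The plan is to prove the equivalent combinatorial statement: every $r$-uniform $r$-regular hypergraph $H$ admits a red/blue colouring of its vertices with no monochromatic hyperedge, equivalently a partition of $V(H)$ into two sets each of which meets every hyperedge. My first step is to pin down the probabilistic baseline and locate exactly where it fails. Colouring each vertex red or blue independently and uniformly, a fixed hyperedge $e$ is monochromatic with probability $2^{1-r}$. Because $H$ is $r$-uniform and $r$-regular, each hyperedge meets at most $r(r-1)$ others, so these ``monochromatic'' events have dependency degree at most $r^2-r$. The symmetric Lov\'asz Local Lemma produces a good colouring as soon as $2^{r-1}\ge \mathrm{e}\,(r^2-r+1)$, which holds precisely for $r\ge 9$; this recovers the bound attributed to \cite{spencer1987ten} and shows that no first-moment or vanilla-LLL argument can reach $r=4$.

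To cover the remaining range $4\le r\le 8$ I would follow the refinement of Alon and Bregman \cite{alon1988every}, replacing the single random colouring by a two-phase random recolouring. In the first phase I colour the vertices independently and uniformly; in the second phase I scan the vertices in a uniformly random order and apply a local recolouring step that flips any vertex currently completing an all-one-colour hyperedge, so that every monochromatic edge is eventually broken. The key point, and the reason $r$-regularity is indispensable, is that one flip can endanger only the at most $r-1$ further hyperedges through that vertex, so conflicts propagate slowly; the probability that the recolouring fails can then be bounded by a union bound over rooted ``conflict trees'' recording the chain of forced recolourings. Estimating the number and total weight of such trees is meant to reduce the effective exponential base from the crude $\mathrm{e}(r^2-r+1)$ of the plain Local Lemma to a quantity that stays below $2^{r-1}$ for every $r\ge 4$, which is exactly the improvement required.

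An alternative realisation of the same ``algebraic methods'' of \cite{alon1988every} would replace the union bound over conflict trees by a weighting and linear-algebra accounting that bounds, more efficiently than the Local Lemma, the number of colourings in which some hyperedge remains monochromatic. Whichever route one takes, the crux, and the main obstacle, is identical: the analysis is acutely sensitive to constants, and the real work is to push the bookkeeping (counting conflict trees, or estimating the relevant weighted sum) far enough to bring the threshold all the way down from $r\ge 8$ to $r\ge 4$, rather than merely reproving the easy $r\ge 9$ bound. I expect essentially all of the difficulty to live in this final constant-tightening, since everything up to the choice of recolouring procedure is standard.
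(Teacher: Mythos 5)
This theorem is quoted in the paper from Henning and Yeo \cite{henning20132}; the paper gives no proof of it, so there is nothing internal to compare against. Judged on its own, your proposal has a genuine gap: the only part you actually carry out is the symmetric Lov\'asz Local Lemma computation, which (as you correctly note) stops at $r\ge 9$ and therefore establishes none of the cases $r\in\{4,5,6,7,8\}$ that constitute the entire content of the theorem. Everything beyond that is stated in the conditional (``is meant to reduce'', ``I expect essentially all of the difficulty to live in this final constant-tightening''): no recolouring rule is fully specified, no bound on the number or weight of conflict trees is derived, and no inequality is verified for any $r\le 8$. Since you yourself identify the constant-tightening as ``the real work,'' deferring it means the proof attempt does not prove the statement.

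There is also a methodological mismatch worth flagging. The random-recolouring-plus-conflict-tree analysis you sketch is the Moser--Tardos / entropy-compression paradigm, which is not how Alon and Bregman or Henning and Yeo proceed: their arguments are deterministic, based on choosing a vertex subset extremal for a carefully designed weight function $\sum_e f(|e\cap X|)$ and showing that both $X$ and its complement must be transversals, with the $r$-regularity entering through a counting identity rather than through slow conflict propagation. This matters because at $r=4$ a monochromatic edge has only $2^{3}=8$-to-one odds against it while a single flip can damage up to $r-1=3$ other edges each of size $4$, and there is no reason to believe the tree-counting you describe closes below $2^{r-1}$ in that regime; the known proofs avoid the probabilistic framework precisely because it does not reach $r=4$. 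To repair the proposal you would need either to import the Henning--Yeo weighting argument in full, or to actually exhibit and verify the tree-weight estimates you allude to --- and the latter is not known to succeed.
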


Theorem~\ref{thm:hypergraph-bipartiteness} immediately reveals that all $r$-regular graphs for $r \geq 4$ can be partitioned into two total dominating sets. It is a well-known fact that a cycle has two disjoint total dominating sets, if and only if its order is divisible by $4$. Knowing that in the Heawood graph, complement of no total dominating set is a total dominating set \cite[p.~51, Exercise~3.14]{jukna2011extremal},
one naturally seeks to classify all cubic graphs which can be partitioned into two total dominating sets. This classification, combined with Henning and Yeo's result \cite{henning20132}, characterizes the class of regular graphs whose vertices are impossible to be decomposed into two total dominating sets. It is briefly indicated in \cite{henning20132} that there are infinitely many connected cubic graphs with total domatic number one, the characterization of which is an open problem.

Since coloring and partitioning are essentially the same, the total domination has also been studied in the literature of graph coloring under the name of coupon coloring \cite{chen2015coupon}.
A \textit{$k$-coupon coloring} of a graph $G$ is an assignment of colors from $\left\{1, \ldots, k\right\}$ to $V(G)$, such that for all $v \in V(G)$, the open neighborhood of $v$ contains all $k$ colors. The maximum $k$ for which a $k$-coupon coloring of $G$ exists is called the \textit{coupon coloring number} of $G$.
It is evident that the coupon coloring number and the total domatic number of a graph are the same.


Given a family $\mathcal{F}$ of graphs, an \textit{$\mathcal{F}$-partitioning} of the vertices of $G$ is
a set of disjoint subgraphs of $G$, each isomorphic to a graph in $\mathcal{F}$, covering every vertex of $G$.

\section{Main Result}
\label{sec:result}

In this section we prove that for all cubic regular graphs, having no subgraph isomorphic to $L$ (given in Figure~\ref{fig:forbidden}), the total domatic number is at least two.

\begin{figure}[ht]
\centering
\includegraphics[scale=0.5]{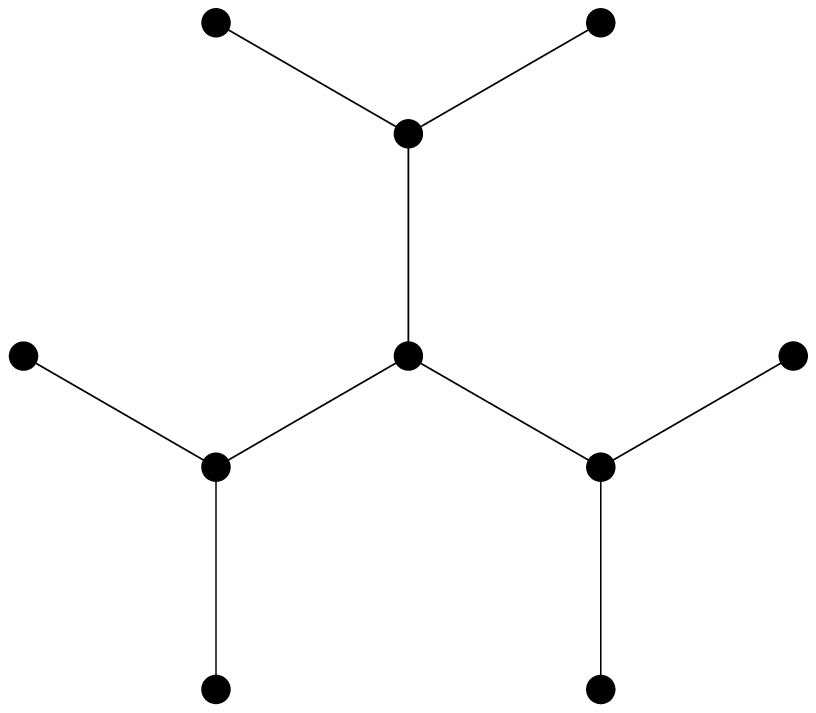}
\caption{The graph $\Forbidden$}
\label{fig:forbidden}
\end{figure}

\begin{theorem}
\label{thm:main}
The vertex set of a cubic graph can be partitioned into two total dominating sets if it has no subgraph (not necessarily induced) isomorphic to the graph $\Forbidden$. In other words, coupon coloring number of every such graph is at least two.
\end{theorem}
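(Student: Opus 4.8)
The plan is to work with the reformulation already recorded in the introduction: a cubic graph $G$ decomposes into two total dominating sets exactly when $V(G)$ admits a $2$-coloring in which no vertex sees a single color on all three of its neighbors, equivalently when the open neighborhood hypergraph $H(G)$ is $2$-colorable. Since $G$ is cubic, $H(G)$ is $3$-uniform and $3$-regular, so this is precisely the regular case $r=3$ left open by Theorem~\ref{thm:hypergraph-bipartiteness}; the Heawood graph (whose neighborhood hypergraph is the Fano plane) shows that $2$-colorability genuinely fails here, and the point of forbidding $\Forbidden$ is to exclude exactly the local configurations responsible. I would therefore prove the contrapositive: if $V(G)$ admits no such $2$-coloring, then $G$ contains a (not necessarily induced) copy of $\Forbidden$.

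To produce the subgraph in the failure case I would run an extremal recoloring argument. Among all colorings $c\colon V(G)\to\{0,1\}$, fix one minimizing the number of \emph{bad} vertices, where a vertex is bad when its three neighbors are monochromatic, and assume for contradiction that this minimum is positive. Choose a bad vertex $v$ with $N_G(v)=\{a,b,c\}$ all colored $0$. Minimality means no single color flip lowers the count, and this is the lever. Flipping $a$ from $0$ to $1$ removes the badness of $v$, so to avoid a net improvement there must be a vertex $w\in N_G(a)$ whose two remaining neighbors are already colored $1$, which the flip turns bad. Repeating this exchange inequality at $a$, $b$, $c$ and at the newly created bad vertices forces a rigid pattern of forced colors and forced adjacencies in a bounded neighborhood of $v$.

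The heart of the proof is then to show that these propagated monochromatic-neighborhood demands cannot be realized in an arbitrary cubic environment, but must close up into one bounded configuration, which I claim is a copy of $\Forbidden$. I would organize this using the $\mathcal{F}$-partitioning language: the saturated local pieces that can appear as closed-up neighborhoods are a short list of small graphs (the candidates being $\Triangle$, $\Square$, $\KTwoThree$, and the gadgets $\House$, $\Domino$, $\Mongolian$), and for each candidate other than $\Forbidden$ I would exhibit an explicit improving flip, contradicting minimality. Cubicity is what keeps the branching finite: at every forced vertex only one incidence is free, so the exchange process has bounded out-degree and terminates within a constant-radius ball around $v$.

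The main obstacle is precisely this collapse to a \emph{single} graph. Local-switching arguments ordinarily yield a whole family of forbidden patterns, so the real work is a delicate finite case analysis showing that the $0/1$ demands on the three branches at $v$ can be met simultaneously only in the one way that produces $\Forbidden$, every other closure admitting an improving recoloring. I expect two recurring technical hazards: first, showing the forced structure cannot spread indefinitely (so that the extracted subgraph is genuinely of bounded size); and second, handling the degenerate situations where $H(G)$ is not simple because two vertices share a neighborhood, as happens inside a $\KTwoThree$, since that is where the flip-counting could be off by one. Controlling these two points is what would turn the exchange inequalities into the clean statement that non-$2$-colorability forces $\Forbidden$.
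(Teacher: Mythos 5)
There is a genuine gap: the entire content of the theorem is delegated to a case analysis that you never perform. Your plan is (i) reduce to the contrapositive, (ii) take a $2$-coloring minimizing the number of bad vertices, (iii) derive exchange inequalities from the fact that no single flip improves it, and (iv) claim that the forced structure ``closes up'' into a copy of $\Forbidden$ and into nothing else. Steps (i)--(iii) are routine and correct as far as they go (flipping a neighbor $a$ of a bad vertex $v$ must create a new bad vertex $w\in N_G(a)$ whose other two neighbors carry the new color of $a$). But step (iv) is exactly the theorem, and you give no mechanism for it: the exchange inequalities only say that \emph{each} flip creates \emph{some} new bad vertex somewhere in $N_G(a)\cup N_G(b)\cup N_G(c)$, and iterating this produces a propagating chain of forced colors with no a priori bound on its radius. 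You name the two hazards yourself (unbounded spread, and coincidences of neighborhoods as in $\KTwoThree$) but do not resolve either. Listing the graphs of Figure~\ref{fig:family} as ``candidates'' is not an argument that these are the only closed-up configurations, nor do you exhibit the improving flip for any of them. There is also a structural reason to doubt the local strategy: by Lemma~\ref{lem:forbidden} (which is in fact an equivalence), forbidding $\Forbidden$ is the same as requiring every vertex to lie on a $\Triangle$ or a $\Square$, so your contrapositive must locate a vertex on \emph{no} short cycle starting from a bad vertex of a minimal coloring; but nothing in the first-order optimality conditions confines the obstruction to a bounded ball around that bad vertex, and the paper's own proof is genuinely global (the colorings of the $\Triangle$ and $\Mongolian$ blocks depend on vertices outside those blocks), which suggests the obstruction is not local in the sense your argument needs.

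For comparison, the paper avoids extremal recoloring entirely. It first shows (Lemma~\ref{lem:forbidden}) that in an $\Forbidden$-free cubic graph every vertex lies on a $\Triangle$ or a $\Square$, then greedily builds a partition of $V(G)$ into subgraphs from the family $\mathcal{F}=\{\Triangle,\Square,\KTwoThree,\House,\Domino,\Mongolian\}$ by adding one short cycle at a time and locally re-splitting the block it collides with (Lemma~\ref{lem:partitioning}); finally it $2$-coupon-colors each block, using a neighbor outside the block to anchor the colorings of the $\Triangle$ and $\Mongolian$ pieces. If you want to salvage your approach, you would need to prove a quantitative local statement of the form ``if every vertex within distance $d$ of a bad vertex lies on a $\Triangle$ or $\Square$, then some bounded sequence of flips reduces the number of bad vertices,'' and that is not easier than the paper's constructive argument.
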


In this section we prove our main result towards the characterization of cubic graphs $G$ with $d_t(G) \geq 2$.
Before we embark on that, we state two relevant lemmas.

\begin{lemma}
\label{lem:forbidden}
In a cubic graph $G$ with no subgraph isomorphic to $\Forbidden$, every vertex is either contained in a $\Triangle$ or a $\Square$.
\end{lemma}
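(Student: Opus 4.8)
The plan is to prove the contrapositive: if some vertex $v$ lies in neither a $\Triangle$ nor a $\Square$, then $G$ contains a subgraph isomorphic to $\Forbidden$. Since $G$ is cubic, I would start by naming the local neighbourhood of $v$: let $a, b, c$ be the three neighbours of $v$, let $a_1, a_2$ be the two neighbours of $a$ other than $v$, and define $b_1, b_2$ and $c_1, c_2$ analogously for $b$ and $c$. These ten vertices, together with the nine edges joining each of $v, a, b, c$ to its listed neighbours, form the radius-two tree rooted at $v$ that is drawn in Figure~\ref{fig:forbidden}. The entire lemma then reduces to showing that this configuration is a genuine subgraph, i.e.\ that the ten labels denote ten \emph{distinct} vertices; once that is done, the nine edges exhibit a copy of $\Forbidden$ and we are finished.

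First I would use the absence of a $\Triangle$ through $v$. This immediately makes $a, b, c$ pairwise non-adjacent, and, more usefully, it separates the first and second neighbourhoods: any equality such as $a_1 = b$ would force the edge $ab$ and hence the triangle $v\,a\,b$, so each of $a_1, a_2, b_1, b_2, c_1, c_2$ must differ from every vertex of $\{v, a, b, c\}$. Next I would use the absence of a $\Square$ through $v$ to keep the three branches apart: an equality such as $a_i = b_j$ would give $a$ and $b$ a common neighbour distinct from $v$ and thus the four-cycle $v\,a\,a_i\,b\,v$, which is excluded. Within a single branch the two second-neighbours are distinct because $G$ is simple, and $a, b, c$ are likewise distinct. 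Collecting these facts establishes that all ten vertices are pairwise distinct.

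With distinctness in hand, the subgraph on $\{v, a, b, c, a_1, a_2, b_1, b_2, c_1, c_2\}$ with edge set $\{va, vb, vc, aa_1, aa_2, bb_1, bb_2, cc_1, cc_2\}$ is isomorphic to $\Forbidden$, contradicting the hypothesis that $G$ is $\Forbidden$-free. Hence every vertex lies in a $\Triangle$ or a $\Square$, as claimed.

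The hard part, such as it is, is purely the bookkeeping in the middle step: one must rule out \emph{every} possible coincidence among the labelled vertices, not merely a representative one, and in each case confirm that the coincidence produces a cycle of length exactly three or four through $v$ (so that the per-vertex hypothesis genuinely applies, rather than some cycle elsewhere in $G$). No quantitative estimate enters; the whole content is that a cubic neighbourhood cannot fold back on itself within distance two without creating a short cycle at $v$.
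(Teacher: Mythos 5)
Your proof is correct and follows the same route as the paper's: the paper's own argument is just a terser version of your observation that a vertex in no $\Triangle$ and no $\Square$ has six distinct second neighbours, so the radius-two tree at $v$ is a copy of $\Forbidden$. Your write-up actually supplies the distinctness bookkeeping that the paper leaves implicit.
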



\begin{proof}
By contradiction, assume
that there exists a vertex $v \in V(G)$ which is contained neither in a $\Triangle$ nor a $\Square$. There are exactly $6$ vertices which have distance $2$ of $v$. Therefore, $G$ has a subgraph isomorphic to $\Forbidden$,
a contradiction.
\end{proof}

Figure~\ref{fig:family} presents a family $\mathcal{F}$ of graphs which will be used in the next lemma.

\begin{figure}[h!]
\centering
\hspace{\fill}
\begin{subfigure}{0.3\textwidth}
\centering
\includegraphics[scale=0.5]{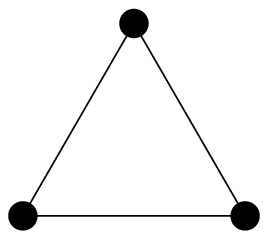}
\caption{$\Triangle$}
\end{subfigure}
\hspace{\fill}
\begin{subfigure}{0.3\textwidth}
\centering
\includegraphics[scale=0.5]{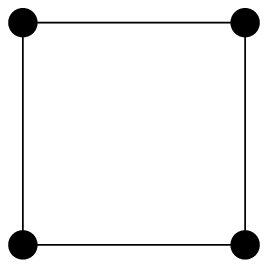}
\caption{$\Square$}
\end{subfigure}
\hspace{\fill}
\begin{subfigure}{0.3\textwidth}
\centering
\includegraphics[scale=0.5]{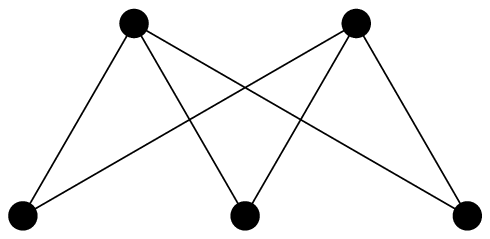}
\caption{$\KTwoThree$}
\end{subfigure}
\hspace{\fill}
\par\bigskip
\hspace{\fill}
\begin{subfigure}{0.3\textwidth}
\centering
\includegraphics[scale=0.5]{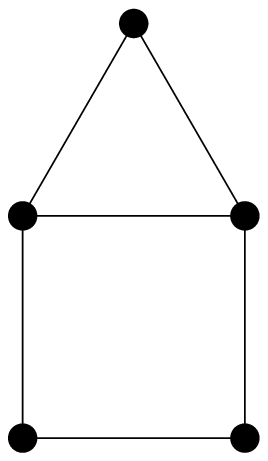}
\caption{$\House$}
\end{subfigure}
\hspace{\fill}
\begin{subfigure}{0.3\textwidth}
\centering
\includegraphics[scale=0.5]{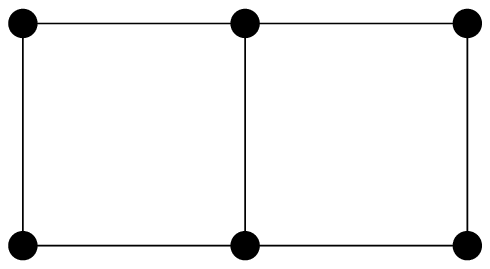}
\caption{$\Domino$}
\end{subfigure}
\hspace{\fill}
\begin{subfigure}{0.3\textwidth}
\centering
\includegraphics[scale=0.5]{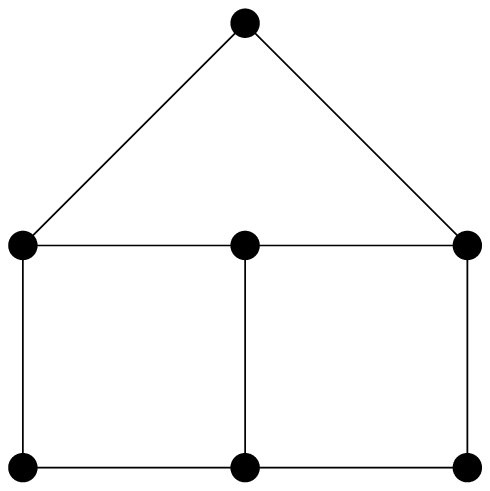}
\caption{$\Mongolian$}
\end{subfigure}
\hspace{\fill}
\caption{Family $\mathcal{F}$ of graphs}
\label{fig:family}
\end{figure}

\begin{lemma}
\label{lem:partitioning}
Every cubic graph with no subgraph isomorphic to $\Forbidden$, has an $\mathcal{F}$-partitioning,
where $\mathcal{F}$ is the family of graphs which was described in Figure~\ref{fig:family}.
\end{lemma}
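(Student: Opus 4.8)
The plan is to build the partition greedily and then push all the difficulty into a bounded local argument. I would fix a family of pairwise vertex-disjoint subgraphs of $G$, each isomorphic to a member of $\mathcal{F}$, chosen so as to cover as many vertices as possible, and suppose for contradiction that some vertex $v$ remains uncovered. By Lemma~\ref{lem:forbidden}, $v$ lies on a $\Triangle$ or a $\Square$; call this short cycle $Q$. Since $\Triangle,\Square \in \mathcal{F}$, not every vertex of $Q$ can be uncovered, as otherwise adjoining $Q$ would cover strictly more vertices; hence $Q$ meets at least one of the chosen blocks. The engine of the proof is then a local surgery: using that $G$ is cubic and $\Forbidden$-free, I would show that the union of $Q$ with the blocks it meets can always be repartitioned into members of $\mathcal{F}$ covering strictly more vertices, contradicting maximality. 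It therefore suffices to enumerate, up to isomorphism, the ways a short cycle can sit against the already chosen blocks and to exhibit an improving repartition in each case.

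To make this enumeration tractable I would first pin down how triangles and $4$-cycles can overlap under $3$-regularity. Two distinct triangles cannot share exactly one vertex, since that vertex would then have degree at least four, and an edge lies in at most one further triangle, because each of its endpoints has a single neighbour off the edge. Hence the triangles of $G$ organize into isolated copies of $\Triangle$, diamonds $K_4-e$, and copies of $K_4$, where a diamond and a $K_4$ each contain a spanning $\Square$. This already assigns a $\Triangle$ or $\Square$ block to every triangle vertex, with one configuration to record separately: if the two off-edge neighbours of some edge of a triangle are themselves adjacent, then that triangle together with the resulting $4$-cycle is exactly a house $\House$, which I would take as a single block.

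It remains to treat the vertices lying only on $4$-cycles, where every relevant $\Square$ is chordless. Here I would again catalogue how two such $4$-cycles may intersect under $3$-regularity: sharing a single vertex is impossible, a shared edge yields a domino $\Domino$, and a shared path of length two yields a $\KTwoThree$, with the remaining admissible overlaps accounted for by $\Mongolian$. Each resulting configuration is a member of the family in Figure~\ref{fig:family}, so the square-only part of $G$ is packaged into blocks as well.

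The main obstacle is precisely the local surgery step: showing that whenever the short cycle $Q$ through the uncovered vertex $v$ collides with previously chosen blocks, the collision can always be resolved by a repartition covering more vertices. Because every member of $\mathcal{F}$ has at most six vertices and $G$ is cubic, only finitely many collision patterns arise, and $\Forbidden$-freeness (through Lemma~\ref{lem:forbidden}) rules out the configurations that would otherwise leave a vertex stranded between two blocks it cannot join. Nevertheless, verifying an improving repartition in each pattern — in particular confirming that the overlap cases above are genuinely exhaustive and that no combination of them forces an uncoverable remainder — is the technical heart of the argument and is where I expect the real work to lie.
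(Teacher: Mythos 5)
Your global strategy --- take a vertex-disjoint packing of $\mathcal{F}$-subgraphs covering a maximum number of vertices, locate a short cycle through an uncovered vertex via Lemma~\ref{lem:forbidden}, and resolve the collision with existing blocks by a local repartition that covers strictly more vertices --- is exactly the paper's proof recast in extremal rather than iterative form. But the statement you yourself identify as ``the technical heart of the argument'' is the entire content of the lemma, and you do not supply it; as written, the proposal is a plan with the proof missing. The single observation that makes the case analysis finite and short is one you never state: every member of $\mathcal{F}$ has minimum degree $2$, so in a cubic graph a covered vertex has at most one neighbour outside its block. Applied to the short cycle $C$ through the uncovered vertex $v$, this forces (i) the case ``exactly one other vertex of $C$ is covered'' to be impossible, (ii) all covered vertices of $C$ to lie in one block $Q$ and to be degree-$2$ vertices of $Q$, adjacent when two are covered and forming a path with degree-$2$ endpoints at distance $2$ when three are covered. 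That pins $Q$ down to a handful of possibilities ($\Triangle$, $\Square$, $\House$, $\Domino$ in the two-covered case; $\Square$, $\KTwoThree$, $\House$, $\Domino$, $\Mongolian$ in the three-covered case; $\Triangle$, $\Square$, $\House$, $\Domino$ when $C$ is a triangle with both other vertices covered), and for each one an explicit improving repartition must be exhibited (e.g.\ $\Square$ meeting a $\Triangle$ along an edge becomes $\House$; meeting a $\Domino$ becomes two $\Square$s; a $\Mongolian$ absorbing a degree-completing vertex splits into two $\Square$s). None of these verifications appear in your write-up.

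Your middle two paragraphs are also a detour that does not do what you need. Cataloguing how triangles and $4$-cycles of $G$ can pairwise intersect under $3$-regularity does not by itself yield \emph{disjoint} blocks: a vertex can lie on a triangle and on several $4$-cycles simultaneously, two of your ``house'' or ``domino'' configurations can share vertices, and nothing in that catalogue prevents the assigned blocks from overlapping. The disjointness is exactly what the maximality-plus-surgery (equivalently, the paper's incremental) argument is for, so the classification of overlaps inside $G$ can be dropped; what cannot be dropped is the case-by-case construction of the improving repartition, which is where your proposal stops.
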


\begin{proof}
Let $G$ be a cubic graph with no subgraph isomorphic to $\Forbidden$. We prove this lemma by providing an algorithm. We construct an $\mathcal{F}$-partitioning of $G$ by maintaining an $\mathcal{F}$-partitioning $\mathcal{P}$ of an induced subgraph $H$ of $G$ while iteratively adding some vertices to $H$, to obtain $H=G$.

We start with $H$ as the empty subgraph and $\mathcal{P}$ as the empty set. Assume that in the beginning of some step, $H$ is an induced proper subgraph of $G$ and $\mathcal{P}$ is an $\mathcal{F}$-partitioning of $H$. Choose a vertex $v \in V(G) \setminus V(H)$. We show how to add $v$ to $H$ and update $\mathcal{P}$.

By Lemma~\ref{lem:forbidden}, every vertex of $G$ is either contained in a $\Triangle$ or a $\Square$.
First, assume that $v$ is contained in a $\Square$ but not in a $\Triangle$. Let $C$ be that $4$-cycle and let $u_1$, $u_2$ and $u_3$ be the other vertices of $C$, where $u_1$ and $u_3$ are adjacent to $v$ in $C$.
Regarding the number of vertices among $u_1, u_2$ and $u_3$ that are already in $H$, one of the following holds:
\begin{enumerate}[leftmargin=0pt,align=left,itemindent=\parindent,label={\textbf{Case \arabic*}. }]
\item None of $u_1$, $u_2$ and $u_3$ lie in $V(H)$. In this case, we add $v, u_1, u_2$ and $u_3$ to $H$ and $C$ to $\mathcal{P}$. Clearly, $H$ has an $\mathcal{F}$-partitioning.
\item Exactly one of $u_1, u_2$ and $u_3$ are in $V(H)$.
Observe that all graphs in $\mathcal{F}$ have minimum degree $2$. Hence, a vertex in $H$ with two neighbors outside $H$, would have degree at least $4$ in $G$, a contradiction. Thus this case never occurs.
\item Exactly two of $u_1, u_2$ and $u_3$ are in $V(H)$.
By the previous observation and noting that $v$ is not on a $C_3$, these two vertices should be adjacent and they are of degree $2$ in $H$. Furthermore, they have to be in the same element $Q \in \mathcal{P}$. The only graphs in $\mathcal{F}$ with two adjacent vertices of degree $2$ are $\Triangle$, $\Square$, $\House$ and $\Domino$. Without loss of generality, assume that $u_2, u_3 \in V(H)$ (see Figure~\ref{fig:case_1_3}).
\begin{enumerate}
\item If $Q \simeq \Triangle$, then we add $v$ and $u_1$ to $H$ and replace $Q$ with $\House$, consisting of vertices of $Q$ and $C$.
\item If $Q \simeq \Square$, then we add $v$ and $u_1$ to $H$ and replace $Q$ with $\Domino$, consisting of vertices of $Q$ and $C$.
\item If $Q \simeq \House$, then we add $v$ and $u_1$ to $H$ and replace $Q$ with a $\Triangle$ and a $\Square$, consisting of vertices of $Q$ and $C$. It is clear from Figure~\ref{fig:case_1_3_c} that the subgraph induced on $V(Q) \cup \{v\}$ contains two disjoint subgraphs isomorphic to $\Triangle$ and $\Square$.
\item If $Q \simeq \Domino$, then we add $v$ and $u_1$ to $H$ and replace $Q$ with two $\Square$ graphs, consisting of vertices of $Q$ and $C$. It is clear from Figure~\ref{fig:case_1_3_d} that the subgraph induced on $V(Q) \cup \{v\}$ contains two disjoint subgraphs isomorphic to $\Square$.
\end{enumerate}

\begin{figure}[ht]
\centering
\begin{subfigure}{0.23\linewidth}
\includegraphics[scale=0.5]{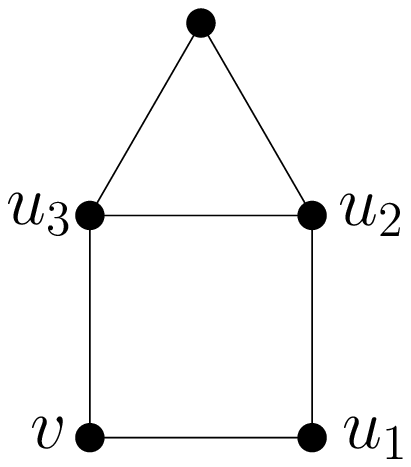}
\centering
\caption{Replacing $\Triangle$ with $\House$}
\end{subfigure}
\hspace{\fill}
\begin{subfigure}{0.23\linewidth}
\centering
\includegraphics[scale=0.5]{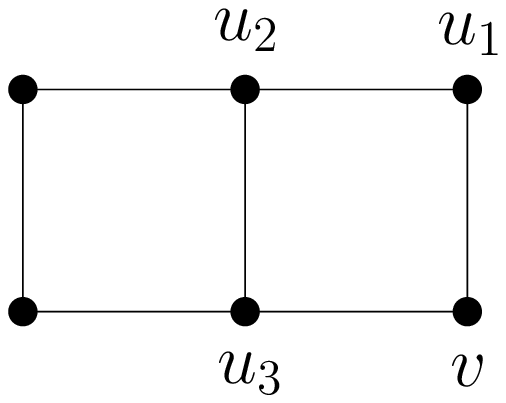}
\caption{Replacing $\Square$ with $\Domino$}
\end{subfigure}
\hspace{\fill}
\begin{subfigure}{0.23\linewidth}
\centering
\includegraphics[scale=0.5]{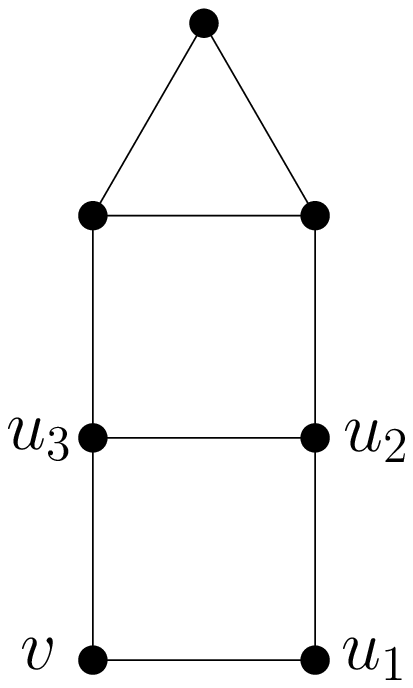}
\caption{Replacing $\House$ with a $\Triangle$ and a $\Square$}
\label{fig:case_1_3_c}
\end{subfigure}
\hspace{\fill}
\begin{subfigure}{0.23\linewidth}
\centering
\includegraphics[scale=0.5]{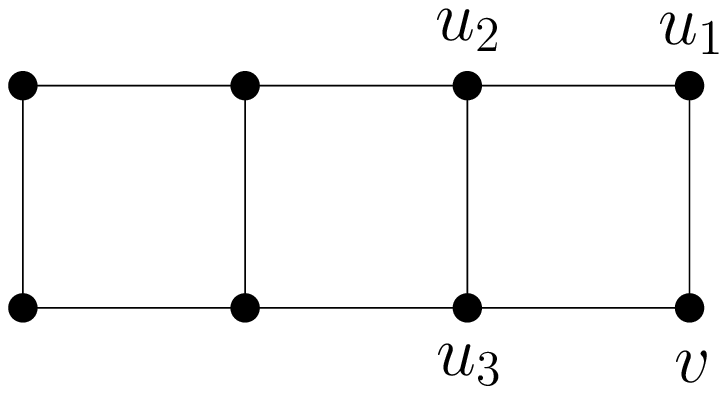}
\caption{Replacing $\Domino$ with two $\Square$ graphs}
\label{fig:case_1_3_d}
\end{subfigure}
\caption{Updating the $\mathcal{F}$-partitioning of $H$}
\label{fig:case_1_3}
\end{figure}

\item All $u_1, u_2$ and $u_3$ are in $V(H)$.
In this case,
again,
$u_1, u_2$ and $u_3$ have to be in the same element $Q$ of $\mathcal{P}$, as otherwise a vertex of degree one should appear in some element in $\mathcal{P}$. Thus $Q$ is a graph with two vertices of degree $2$ at distance $2$ of each other.
All graphs in $\mathcal{F}$ have this property, except $\Triangle$. Each of these cases are discussed below.

\begin{enumerate}
\item If $Q \simeq \Square$, then we add $v$ to $H$ and replace $Q$ with $\KTwoThree$, consisting of $v$ and all vertices of $Q$.
\item If $Q \simeq \KTwoThree$, then we add $v$ to $H$ and replace $Q$ with $\Domino$, consisting of $v$ and all vertices of $Q$.
It is clear that Figure~\ref{fig:case_1_4_b} contains a subgraph isomorphic to $\Domino$.
\item If $Q \simeq \House$, then we add $v$ to $H$ and replace $Q$ with $\Domino$, consisting of $v$ and all vertices of $Q$.
It is clear that Figure~\ref{fig:case_1_4_c} contains a subgraph isomorphic to $\Domino$.
\item If $Q \simeq \Domino$, then we add $v$ to $H$ and replace $Q$ with $\Mongolian$, consisting of $v$ and all vertices of $Q$.
\item If $Q \simeq \Mongolian$, then we add $v$ to $H$ and replace $Q$ with two disjoint $4$-cycles.
\end{enumerate}

\begin{figure}[ht]
\centering
\begin{subfigure}{0.32\textwidth}
\centering
\includegraphics[scale=0.5]{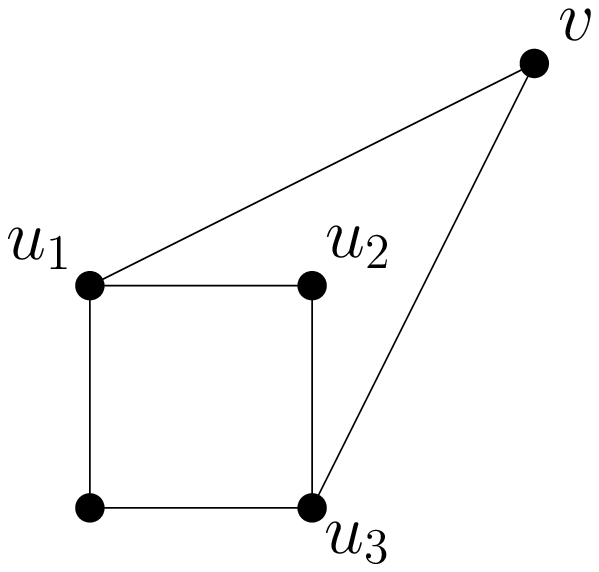}
\caption{Replacing $\Square$ with $\KTwoThree$}
\label{fig:case_1_4_a}
\end{subfigure}
\hspace{\fill}
\begin{subfigure}{0.32\textwidth}
\centering
\includegraphics[scale=0.5]{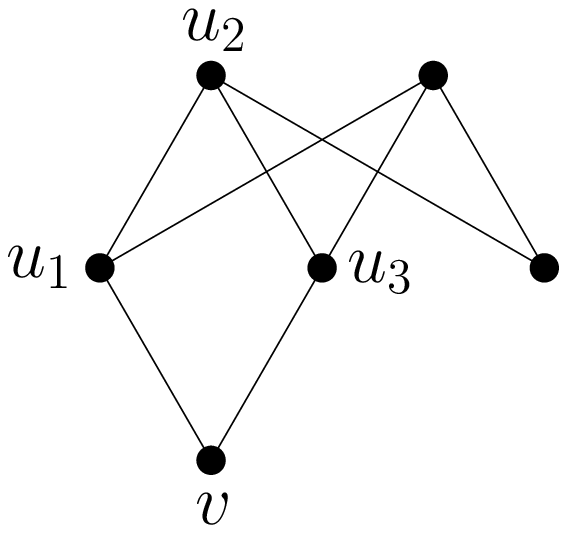}
\caption{Replacing $\KTwoThree$ with $\Domino$}
\label{fig:case_1_4_b}
\end{subfigure}
\hspace{\fill}
\begin{subfigure}{0.32\textwidth}
\centering
\includegraphics[scale=0.5]{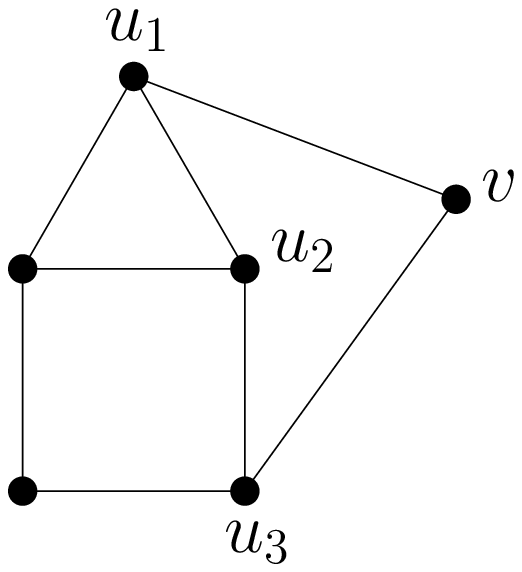}
\caption{Replacing $\House$ with $\Domino$}
\label{fig:case_1_4_c}
\end{subfigure}
\par\bigskip
\hspace{\fill}
\begin{subfigure}{0.45\textwidth}
\centering
\includegraphics[scale=0.5]{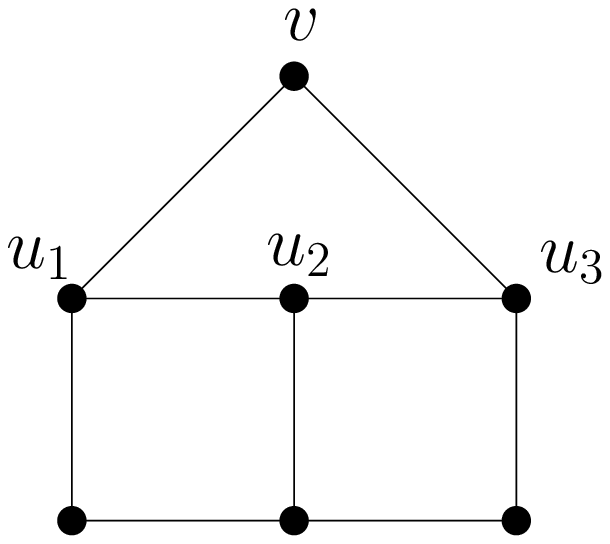}
\caption{Replacing $\Domino$ with $\Mongolian$}
\label{fig:case_1_4_d}
\end{subfigure}
\hspace{\fill}
\begin{subfigure}{0.45\textwidth}
\centering
\includegraphics[scale=0.5]{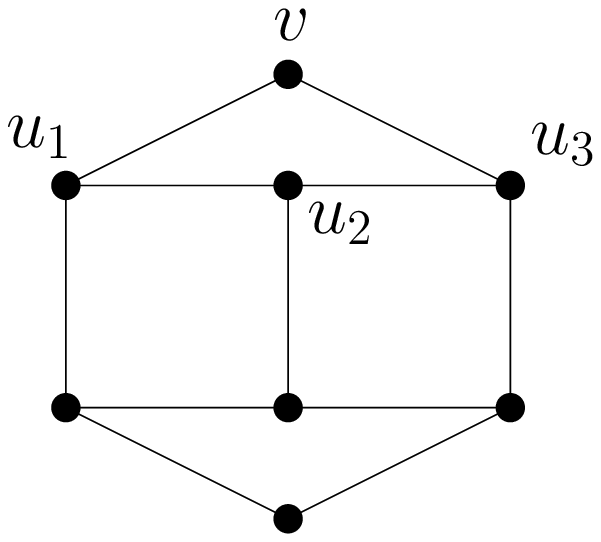}
\caption{Replacing $\Mongolian$ with two disjoint $4$-cycles}
\label{fig:case_1_4_e}
\end{subfigure}
\hspace{\fill}
\caption{Updating $\mathcal{F}$-partitioning of $H$}
\label{fig:case_1_4}
\end{figure}

Figure~\ref{fig:case_1_4} illustrates that $\mathcal{P}$ will in the end be an $\mathcal{F}$-partitioning of $H$.
\end{enumerate}

Next, assume that $v$ is contained in a $3$-cycle $C$, with vertex set $\left\{v, w_1, w_2\right\}$. Again, by observing the degrees of the graphs in $\mathcal{F}$, it can be seen that either both $w_1$ and $w_2$ are in $H$ or none are in $H$. If neither of $w_1$ and $w_2$ are in $H$, then we add $v$, $w_1$ and $w_2$ to $H$ and add $C$ to $\mathcal{P}$. If $w_1$ and $w_2$ are in $H$, then they have to be on the same element $Q \in \mathcal{P}$. Thus $Q$ is a subgraph with two adjacent vertices of degree $2$. The only graphs in $\mathcal{F}$ with this property are $\Triangle$, $\Square$, $\House$ and $\Domino$.
\begin{enumerate}[label={(\alph*) }]
\item If $Q \simeq \Triangle$, then we add all vertices of $C$ to $H$ and replace $Q$ with $\Square$, consisting of $v$ and vertices of $Q$.
\item If $Q \simeq \Square$, then we add all vertices of $C$ to $H$ and replace $Q$ with $\House$, consisting of $v$ and vertices of $Q$.
\item If $Q \simeq \House$, then we add all vertices of $C$ to $H$ and replace $Q$ with two $\Triangle$ graphs, consisting of $v$ and vertices of $Q$.
\item If $Q \simeq \Domino$, then we add all vertices of $C$ to $H$ and replace $Q$ with $\Triangle$ and a $\Square$, consisting of $v$ and vertices of $Q$.
\end{enumerate}
\vspace{-2em} 
%
\end{proof}

Using Lemma~\ref{lem:partitioning}, we can now state the proof of Theorem~\ref{thm:main}.
\begin{proof}[Proof of Theorem~\ref{thm:main}]

To prove the theorem, we show that the vertices of any cubic graph $G$, with no subgraph isomorphic to $\Forbidden$, can be colored by black and white such that each vertex is adjacent to at least one vertex of each color. By Lemma~\ref{lem:partitioning}, $G$ has an $\mathcal{F}$-partitioning $\mathcal{P}$. It is easy to verify that $C_4$, $K_{2,3}$, $X$ and $Y$ each have total domatic number at least $2$. Choose an arbitrary $2$-coupon coloring for each element $Q \in \mathcal{P}$ isomorphic to $C_4$, $K_{2,3}$, $X$ or $Y$. It now remains to color the vertices of $\Triangle$ and $\Mongolian$ subgraphs.

Figure~\ref{fig:color_C_3_&_M_2_3} shows two colorings of $\Triangle$ and $\Mongolian$, respectively.
In this figure, a filled square represents a black vertex and an empty square represents a white vertex.
To color the vertices of $\Triangle$ and $\Mongolian$ subgraphs, we repeatedly choose an arbitrary $\Triangle$ subgraph or $\Mongolian$ subgraph $Q \in \mathcal{P}$ and an arbitrary vertex $v$ in $N_G(Q)$.
If $v$ is already colored at this step, we color the vertices of $Q$ with respect to the color $v$ as in Figure~\ref{fig:color_C_3_&_M_2_3}.
However, it is possible for $v$ to be uncolored. This is the case when $v$ belongs to another $\Triangle$ subgraph or $\Mongolian$ subgraph $Q'$.
In this case we color $v$ black, and then choose the color of vertices of $Q$ and $Q'$ based on the colorings given in Figure~\ref{fig:color_C_3_&_M_2_3}.

\begin{figure}[ht]
\begin{subfigure}{0.23\textwidth}
\centering
\includegraphics[scale=0.5]{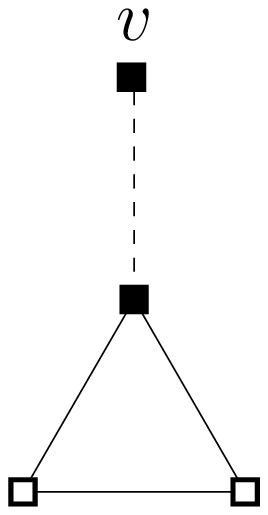}
\end{subfigure}
\hspace{\fill}
\begin{subfigure}{0.23\textwidth}
\centering
\includegraphics[scale=0.5]{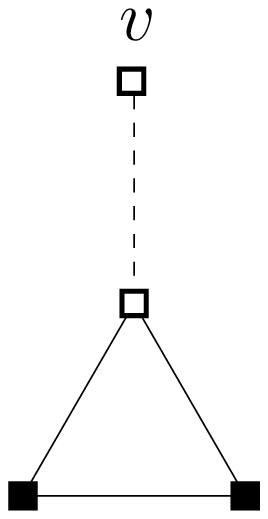}
\end{subfigure}
\hspace{\fill}
\begin{subfigure}{0.23\textwidth}
\centering
\includegraphics[scale=0.5]{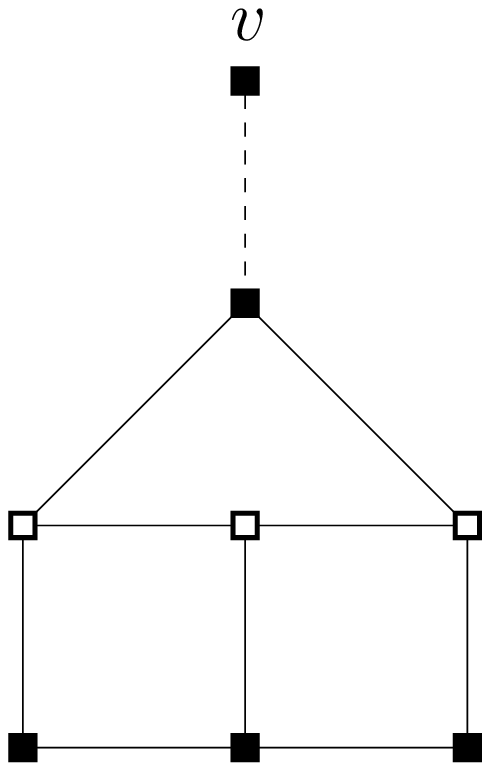}
\end{subfigure}
\hspace{\fill}
\begin{subfigure}{0.23\textwidth}
\centering
\includegraphics[scale=0.5]{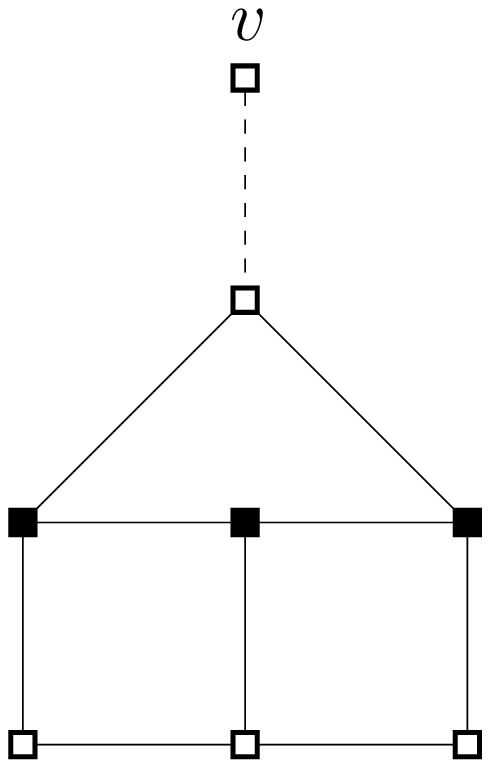}
\end{subfigure}
\caption{Colorings of $\Triangle$ and $\Mongolian$}
\label{fig:color_C_3_&_M_2_3}
\end{figure}

When every vertex is colored, each color class is a total dominating set, since any vertex of $G$ has at least one neighbor of each color. Thus we have partitioned the graph into two total dominating sets.
\end{proof}




\section*{References}

\begin{thebibliography}{00}
\bibitem{alon1988every} N. Alon, Z. Bregman, Every $8$-uniform $8$-regular hypergraph is $2$-colorable, Graphs Comb. 4 (1988) 303--306.
\bibitem{chen2015coupon} B. Chen, J.H. Kim, M. Tait, J. Verstraete, On coupon colorings of graphs, Discret. Appl. Math. (2015).
\bibitem{cockayne1980total} E.J. Cockayne, R.M. Dawes, S.T. Hedetniemi, Total domination in graphs, Networks. 10 (1980) 211--219.
\bibitem{haynes1998fundamentals} T.W. Haynes, S.T. Hedetniemi, P.J. Slater, Fundamentals of Domination in Graphs, CRC Press, 1998.
\bibitem{haynes1998domination} T.W. Haynes, S.T. Hedetniemi, P.J. Slater, Domination in Graphs: Advanced Topics, Marcel Dekker, 1998.
\bibitem{henning2013total} M.A. Henning, A. Yeo, Total Domination in Graphs, Springer, 2013.
\bibitem{henning20132} M.A. Henning, A. Yeo, $2$-Colorings in $k$-regular $k$-uniform hypergraphs, Eur. J. Comb. 34 (2013) 1192--1202.
\bibitem{jukna2011extremal} S. Jukna, Extremal Combinatorics With Applications in Computer Science, Second Edition, Springer Science \& Business Media, 2011.
\bibitem{spencer1977asymptotic} J. Spencer, Asymptotic lower bounds for Ramsey functions, Discrete Math. 20 (1977) 69--76.
\bibitem{spencer1987ten} J. Spencer, Ten Lectures on The Probabilistic Method, SIAM, 1987.
\end{thebibliography}

\end{document}